\newtheorem{theorem}{Theorem}[section]
\newtheorem{proposition}[theorem]{Proposition}
\newtheorem{lemma}[theorem]{Lemma}
\newtheorem{definition}[theorem]{Definition}
\newcommand{\N}{\mathbb{N}}
\newcommand{\eps}{\varepsilon}
\newcommand{\vp}{\varphi}
\newcommand{\wt}[1]{\widetilde{#1}}
\newcommand{\vv}[1]{\overrightarrow{#1}}
\DeclareMathOperator{\Fix}{Fix}
\title{Revisiting jointly firmly nonexpansive families of mappings}
\author{Andrei Sipo\c s${}^{a,b}$\\[2mm]
\footnotesize ${}^a$Research Center for Logic, Optimization and Security (LOS), Department of Computer Science,\\
\footnotesize Faculty of Mathematics and Computer Science, University of Bucharest,\\
\footnotesize Academiei 14, 010014 Bucharest, Romania\\[1mm]
\footnotesize ${}^b$Simion Stoilow Institute of Mathematics of the Romanian Academy,\\
\footnotesize Calea Grivi\c tei 21, 010702 Bucharest, Romania\\[2mm]
\footnotesize E-mail: andrei.sipos@fmi.unibuc.ro\\
}
\date{}
\begin{document}

\maketitle

\begin{abstract}
Recently, the author, together with L. Leu\c stean and A. Nicolae, introduced the notion of jointly firmly nonexpansive families of mappings in order to investigate in an abstract manner the convergence of proximal methods. Here, we further the study of this concept, by giving a characterization in terms of the classical resolvent identity, by improving on the rate of convergence previously obtained for the uniform case, and by giving a treatment of the asymptotic behaviour at infinity of such families.

\noindent {\em Mathematics Subject Classification 2010}: 90C25, 46N10, 47J25, 47H09, 03F10.

\noindent {\em Keywords:} Convergence of resolvents, proximal point algorithm, CAT(0) spaces, jointly firmly nonexpansive families, proof mining, rates of convergence.
\end{abstract}

\section{Introduction}

In the paper \cite{LeuNicSip18}, Leu\c stean, Nicolae and the author introduced a unifying framework for studying the proximal point algorithm, a fundamental tool of convex optimization going back to Martinet \cite{Mar70}, Rockafellar \cite{Roc76} and Br\'ezis and Lions \cite{BreLio78} (for a detailed history of the relevant proximal methods, see the introduction to \cite{LeuNicSip18}). Even though maximal monotone operators already provided such a unified view in the realm of Hilbert spaces, recent developments in optimization techniques in nonlinear generalizations thereof such as geodesic spaces of non-positive curvature \cite{Bac13,Bac14,BacRei14} demanded higher levels of abstraction. Specifically, what the aforementioned three authors did was (i) to introduce the notion of {\it jointly firmly nonexpansive families of mappings}; (ii) to show that all the possible variants of mappings (usually called ``resolvents'') that are involved in the construction of the corresponding proximal iterations fit the definition; and (iii) to prove that the weak convergence of the iteration can be derived just from this joint firm nonexpansiveness condition.

In addition, it is known that in the so-called uniform cases of the proximal point algorithm (uniformly monotone operators, uniformly convex functions) one has uniqueness and strong convergence to the optimizing point (zero and minimizer, respectively). This ties into the area of {\it proof mining}, an applied subfield of mathematical logic primarily developed in the last decades by Ulrich Kohlenbach and his collaborators (the standard introduction is \cite{Koh08}, while a recent survey is \cite{Koh19}) that aims to analyze proofs in mainstream mathematics using proof-theoretical tools. More precisely, results due to Kohlenbach \cite{Koh90} and Kohlenbach and Oliva \cite{KohOli03} show that usually such an uniqueness property may be made quantitative in such a way as to yield as a consequence a convergence rate for an asymptotically regular iteration. By appropriately tweaking the notion of asymptotic regularity, such a rate was extracted in \cite{LeuNicSip18} that was independent of the sequence of step-sizes used in the construction of the iterative sequence.

{\it In this paper, we continue this study of jointly firmly nonexpansive families of mappings by providing a conceptual characterization of them and by giving new applications.}

Specifically, after reviewing in Section~\ref{sec:prelim} the basic facts that we need about geodesic spaces and firm nonexpansiveness, we present in Section~\ref{sec:relid} a definition of joint firm nonexpansiveness that is, we hope, more flexible than the one in \cite{LeuNicSip18}, and its immediate consequences that we shall need. The main result of this section, Theorem~\ref{th-res-id}, shows that a family of mappings is jointly firmly nonexpansive if and only if it satisfies the resolvent identity and each member of it is nonexpansive. In Section~\ref{sec:uniform}, we show how a recent quantitative result due to Kohlenbach and Powell \cite{KohPowXX} may be used to improve the conditions under which a rate of convergence may be obtained in the uniform case of the proximal point algorithm. Finally, in Section~\ref{sec:as-beh} we show how with minimal boundedness assumptions one can show that for any point, the curve that is obtained by applying to it all mappings in a jointly firmly nonexpansive family strongly converges to the projection of that point onto the common fixed point set of the family. This latter result, Theorem~\ref{t2}, unifies a number of results pertaining to the convergence of approximating curves going back to the 1960s.

\section{Preliminaries}\label{sec:prelim}

We say that a metric space $(X,d)$ is {\it geodesic} if for any two points $x$, $y \in X$ there is a geodesic that joins them, i.e. a mapping $\gamma : [0,1] \to X$ such that $\gamma(0)=x$, $\gamma(1)=y$ and for any $t$, $t' \in [0,1]$ we have that
$$d(\gamma(t),\gamma(t')) = |t-t'| d(x,y).$$
Among geodesic spaces, a subclass that is usually considered (e.g. in convex optimization) to be the proper nonlinear analogue of Hilbert spaces is the class of CAT(0) spaces, introduced by A. Aleksandrov \cite{Ale51} and named as such by M. Gromov \cite{Gro87}, defined as those geodesic spaces $(X,d)$ such that for any geodesic $\gamma : [0,1] \to X$ and for any $z \in X$ and $t \in [0,1]$ we have that
$$d^2(z,\gamma(t)) \leq (1-t)d^2(z,\gamma(0)) + td^2(z,\gamma(1)) - t(1-t)d^2(\gamma(0),\gamma(1)).$$
Another well-known fact about CAT(0) spaces is that each such space $(X,d)$ is {\it uniquely geodesic} -- that is, for any $x$, $y \in X$ there is a unique such geodesic $\gamma : [0,1] \to X$ that joins them -- and in this context we shall denote, for any $t \in [0,1]$, the point $\gamma(t)$ by $(1-t)x + ty$.

In 2008, Berg and Nikolaev proved (see \cite[Proposition 14]{BerNik08}) that in any metric space $(X,d)$, the function $\langle\cdot,\cdot\rangle : X^2 \times X^2 \to \mathbb{R}$, defined, for any $x$, $y$, $u$, $v \in X$, by
$$\langle \vv{xy}, \vv{uv} \rangle := \frac12(d^2(x,v) + d^2(y,u) - d^2(x,u) -d^2(y,v))$$
(where an ordered pair of points $(p,z) \in X^2$ is denoted by $\vv{pz}$), called the {\it quasi-linearization function}, is the unique one such that, for any $x$, $y$, $u$, $v$, $w \in X$, we have that:
\begin{enumerate}[(i)]
\item $\langle\vv{xy},\vv{xy}\rangle = d^2(x,y)$;
\item $\langle\vv{xy},\vv{uv}\rangle = \langle\vv{uv},\vv{xy}\rangle$;
\item $\langle\vv{yx},\vv{uv}\rangle = -\langle\vv{xy},\vv{uv}\rangle$;
\item $\langle\vv{xy},\vv{uv}\rangle + \langle\vv{xy},\vv{vw}\rangle = \langle\vv{xy},\vv{uw}\rangle$.
\end{enumerate}
The inner product notation is justified by the fact that if $X$ is a (real) Hilbert space, for any $x$, $y$, $u$, $v \in X$,
\begin{equation}\label{eq-quasi-inner}
\langle\vv{xy},\vv{uv}\rangle = \langle x-y,u-v \rangle = \langle y-x,v-u \rangle.
\end{equation} 
The main result of \cite{BerNik08}, Theorem 1, characterized CAT(0) spaces as being exactly those geodesic spaces $(X,d)$ such that the corresponding Cauchy-Schwarz inequality is satisfied, i.e. for any $x$, $y$, $u$, $v \in X$,
\begin{equation}\label{CauchySchwartz}
\langle\vv{xy},\vv{uv}\rangle \leq d(x,y)d(u,v).
\end{equation}

Firmly nonexpansive mappings were first introduced by Browder \cite{Bro67} in the context of Hilbert spaces and then by Bruck \cite{Bru73} in the context of Banach spaces (this later definition was also studied, e.g., in \cite{Rei77}). The following generalization to geodesic spaces, inspired by the study of firmly nonexpansive mappings in the Hilbert ball \cite{GoeRei82, GoeRei84, ReiSha87, ReiSha90} was introduced in \cite{AriLeuLop14}.

\begin{definition}
Let $X$ be a CAT(0) space. A mapping $T : X \to X$ is called {\em firmly nonexpansive} if for any $x,y \in X$ and any $t \in [0,1]$ we have that
$$d(Tx,Ty) \leq d((1-t)x + tTx,(1-t)y+tTy).$$
\end{definition}

As mentioned in \cite{AriLopNic15} (see also \cite{KohLopNic17}), if $X$ is a CAT(0) space, every firmly nonexpansive mapping $T:X \to X$ satisfies the so-called {\em property $(P_2)$}, i.e. that for all $x,y \in X$,
$$2d^2(Tx,Ty) \leq d^2(x,Ty) + d^2(y,Tx) - d^2(x,Tx) - d^2(y,Ty),$$
or, using the quasi-linearization function,
\begin{equation}\label{eq-quasi-P2}
d^2(Tx,Ty) \leq \langle\vv{TxTy},\vv{xy}\rangle.
\end{equation}
If $X$ is a Hilbert space, property $(P_2)$ coincides with firm nonexpansiveness as \eqref{eq-quasi-P2} and \eqref{eq-quasi-inner} yield $\|Tx - Ty\|^2 \leq \langle Tx-Ty,x-y \rangle$, which is equivalent to it e.g. by \cite[Proposition 4.2]{BauCom17}. Moreover, from this formulation given by \eqref{eq-quasi-P2} one immediately obtains, using \eqref{CauchySchwartz}, that a self-mapping of a CAT(0) space satisfying property $(P_2)$ is nonexpansive.

Throughout the paper, for any self-mapping $T$ (of an arbitrary set), we denote the set of its fixed points by $\Fix(T)$. 

In the remainder of this section, we shall introduce some notions that are needed in Section~\ref{sec:as-beh}. If $(X,d)$ is a metric space and $(x_n)_{n \in \N}$ is a sequence in $X$, then $(x_n)$ is called {\it metastable} if for any $\eps>0$ and $g:\N\to\N$ there is an $N$ such that for all $i$, $j \in [N,N+g(N)]$, $d(x_i,x_j)\leq\eps$, and that a {\it rate of metastability} for $(x_n)$ is a function $\Psi: (0,\infty) \times \N^\N \to \N$ such that for any $\eps$ and $g$, $\Psi(\eps,g)$ gives an upper bound on the (smallest) corresponding $N$. It is immediate that this is just a reformulation -- actually identifiable in mathematical logic as the Herbrand normal form -- of the Cauchy property; the concept was independently rediscovered by Tao \cite{Tao08} (it was named as such under a suggestion of Jennifer Chayes) and used successfully by him in proving a convergence result for multiple ergodic averages \cite{Tao08A}. The significance of this concept resides in the fact that rates of convergence for iterative sequences which are commonly employed in nonlinear analysis and convex optimization may not be uniform or computable (see \cite{Neu15}) -- in this case, a rate of metastability as introduced above is the next best thing that can be obtained; and the research program of proof mining mentioned in the introduction has achieved non-trivial extraction of such rates from celebrated strong convergence proofs, see, e.g., \cite{Koh11, KohKou15, KohSipXX}.

For all $g: \N \to \N$, we define $\wt{g} : \N \to \N$, for all $n$, by $\wt{g}(n):=n+g(n)$. Also, for all $f:\N \to \N$ and all $n \in \N$, we denote by $f^{(n)}$ the $n$-fold composition of $f$ with itself. Note that for all $g$ and $n$, $\wt{g}^{(n)}(0)\leq\wt{g}^{(n+1)}(0)$.

The following proposition gives a uniform and computable rate of metastability for nondecreasing sequences of nonnegative reals bounded above by a fixed constant.

\begin{proposition}[Quantitative Monotone Convergence Principle, cf. {\cite{Tao08}}]\label{qmcp}
Let $b>0$ and $(a_n)$ be a nondecreasing sequence in $[0,b]$. Then for all $\eps>0$ and $g:\N\to\N$ there is an $N \leq \wt{g}^{\left(\left\lceil\frac b\eps\right\rceil \right)}(0)$ such that for all $i$, $j \in [N,N+g(N)]$, $|a_i-a_j|\leq\eps$.
\end{proposition}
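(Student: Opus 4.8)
The plan is to argue by contradiction via a pigeonhole-type argument on the range $[0,b]$ of the sequence, following the standard treatment of such quantitative monotone convergence statements (as in \cite{Tao08}). First I would fix $\eps>0$ and $g$, set $K:=\lceil b/\eps\rceil$, and single out the $K+1$ indices $N_k:=\wt{g}^{(k)}(0)$ for $k=0,1,\ldots,K$. By the remark immediately preceding the proposition these form a nondecreasing sequence of naturals; moreover $N_{k+1}=\wt{g}(N_k)=N_k+g(N_k)$, so the interval $[N_k,N_k+g(N_k)]$ occurring in the statement is precisely $[N_k,N_{k+1}]$, which makes these the natural candidate values of $N$ to test.

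Next I would assume, towards a contradiction, that the desired conclusion fails for $N=N_k$ for every $k\in\{0,\ldots,K-1\}$. For each such $k$ this produces indices $i_k,j_k\in[N_k,N_{k+1}]$ with $|a_{i_k}-a_{j_k}|>\eps$; since $(a_n)$ is nondecreasing I may assume $i_k\leq j_k$, so $a_{j_k}-a_{i_k}>\eps$. Using monotonicity once more --- $a_{N_{k+1}}\geq a_{j_k}$ and $a_{i_k}\geq a_{N_k}$ --- this upgrades to $a_{N_{k+1}}>a_{N_k}+\eps$ for every $k<K$. Telescoping then gives $a_{N_K}>a_{N_0}+K\eps\geq K\eps\geq b$, contradicting $a_{N_K}\in[0,b]$. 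Hence the conclusion holds for at least one $N\in\{N_0,\ldots,N_{K-1}\}$, and any such $N$ satisfies $N\leq N_{K-1}\leq N_K=\wt{g}^{(\lceil b/\eps\rceil)}(0)$, giving the claimed bound.

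The argument is elementary and I expect no real obstacle; the only points needing a little attention are the two uses of monotonicity --- first to orient $i_k,j_k$, then to pass from the block endpoints $N_k,N_{k+1}$ to the witnessing indices --- since together they are exactly what lets the local ``oscillations'' of size $>\eps$ accumulate into a net increase exceeding $b$. A secondary bookkeeping point is to get the iteration count right so that the witness index stays $\leq\wt{g}^{(\lceil b/\eps\rceil)}(0)$; here the monotonicity of $n\mapsto\wt{g}^{(n)}(0)$ recorded in the excerpt is precisely what is needed.
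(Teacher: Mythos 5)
Your proposal is correct and follows essentially the same route as the paper: assume failure at each $N_k=\wt{g}^{(k)}(0)$, use monotonicity to convert each failing window into an increment $a_{N_{k+1}}>a_{N_k}+\eps$, and telescope to exceed the bound $b$. The only (harmless) difference is bookkeeping: you derive the contradiction from $K=\lceil b/\eps\rceil$ failing blocks, whereas the paper uses $K+1$, so your argument even yields the marginally sharper bound $N\leq\wt{g}^{(K-1)}(0)$.
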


\begin{proof}
Let $\eps>0$ and $g:\N \to \N$. Assume that the conclusion is false, hence in particular for all $i \leq \left\lceil\frac b\eps\right\rceil$, $a_{\wt{g}^{(i+1)}(0)} - a_{\wt{g}^{(i)}(0)} > \eps$. Then
$$b \geq a_{\wt{g}^{\left(\left\lceil\frac b\eps\right\rceil + 1\right)}(0)} \geq a_{\wt{g}^{\left(\left\lceil\frac b\eps\right\rceil + 1\right)}(0)} - a_0 = \sum_{i=0}^{\left\lceil\frac b\eps\right\rceil} a_{\wt{g}^{(i+1)}(0)} - a_{\wt{g}^{(i)}(0)} > \left\lceil\frac b\eps\right\rceil\cdot\eps \geq b,$$
a contradiction.
\end{proof}

\section{The relationship with the resolvent identity}\label{sec:relid}

Fix a CAT(0) space $X$ for the remainder of this paper. If $T$ and $U$ are self-mappings of $X$ and $\lambda$, $\mu>0$, we say that $T$ and $U$ are {\it $(\lambda,\mu)$-mutually firmly nonexpansive} if for all $x$, $y\in X$ and all $\alpha$, $\beta \in [0,1]$ such that 
$(1-\alpha)\lambda=(1-\beta)\mu$, one has that
$$ d(Tx,Uy) \leq d((1-\alpha)x+\alpha Tx,(1-\beta)y+\beta Uy).$$
If $(T_n)_{n \in \N}$ is a family of self-mappings of $X$ and $(\gamma_n)_{n\in\N} \subseteq (0,\infty)$, we say that $(T_n)$ is {\it jointly firmly nonexpansive} with respect to $(\gamma_n)$ if for all $n$, $m\in\N$, $T_n$ and $T_m$ are $(\gamma_n,\gamma_m)$-mutually firmly nonexpansive. In addition, if $(T_\gamma)_{\gamma>0}$ is a family of self-mappings of $X$, we say that it is plainly {\it jointly firmly nonexpansive} if for all $\lambda$, $\mu > 0$, $T_\lambda$ and $T_\mu$ are $(\lambda,\mu)$-mutually firmly nonexpansive. It is clear that a family $(T_\gamma)$ is jointly firmly nonexpansive if and only if for every $(\gamma_n)_{n\in\N} \subseteq (0,\infty)$, $(T_{\gamma_n})_{n \in \N}$ is jointly firmly nonexpansive with respect to $(\gamma_n)$. In \cite{LeuNicSip18} it was shown that examples of jointly firmly nonexpansive families of mappings are furnished by resolvent-type mappings used in convex optimization -- specifically, by:
\begin{itemize}
\item the family $(J_{\gamma f})_{\gamma>0}$, where $f$ is a proper convex lower semicontinous function on $X$ and one denotes for any such function $g$ its proximal mapping by $J_g$;
\item the family $(R_{T, \gamma})_{\gamma>0}$, where $T$ is a nonexpansive self-mapping of $X$ and one denotes, for any $\gamma > 0$, its resolvent of order $\gamma$ by $R_{T, \gamma}$;
\item (if $X$ is a Hilbert space) the family $(J_{\gamma A})_{\gamma>0}$, where $A$ is a maximally monotone operator on $X$ and one denotes for any such operator $B$ its resolvent by $J_B$.
\end{itemize}

Again, if $T$ and $U$ are self-mappings of $X$ and $\lambda$, $\mu>0$, one says that $T$ and $U$ are {\it $(\lambda,\mu)$-mutually $(P_2)$} if for all $x$, $y\in X$, 
$$\frac1{\mu}(d^2(Tx,Uy) + d^2(y,Uy) - d^2(y,Tx)) \leq \frac1{\lambda}(d^2(x,Uy) - d^2(x,Tx) - d^2(Tx,Uy)),$$
or, using the quasi-linearization function,
$$\frac1{\mu}\langle\vv{TxUy},\vv{yUy}\rangle\leq \frac1{\lambda}\langle\vv{TxUy},\vv{xTx}\rangle.$$

\begin{proposition}[{\cite[Corollary 3.11]{LeuNicSip18}}]\label{same-fixed}
Any two mutually $(P_2)$ self-mappings of $X$ have the same fixed points.
\end{proposition}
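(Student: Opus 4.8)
The plan is to reduce the statement to a single inclusion by noting that the mutual $(P_2)$ relation is symmetric in its two mappings, and then to obtain that inclusion by feeding a fixed point into the defining inequality and using that the quasi-linearization function vanishes on a degenerate pair.

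Concretely, suppose $T$ and $U$ are $(\lambda,\mu)$-mutually $(P_2)$ for some $\lambda,\mu>0$. First I would check that then $U$ and $T$ are $(\mu,\lambda)$-mutually $(P_2)$: writing out the defining inequality for the latter, renaming the two bound variables to each other, and then rewriting each term $\langle\vv{UyTx},\cdot\rangle$ as $-\langle\vv{TxUy},\cdot\rangle$ via property (iii) of the quasi-linearization function, one recovers verbatim the defining inequality for the former (the two minus signs reverse the inequality back to its original direction). Hence it suffices to prove $\Fix(T)\subseteq\Fix(U)$; applying this with the pairs $(T,\lambda)$ and $(U,\mu)$ interchanged then yields the reverse inclusion, so that $\Fix(T)=\Fix(U)$.

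To prove $\Fix(T)\subseteq\Fix(U)$, I would take $p\in\Fix(T)$ and substitute $x=y=p$ into
$$\frac1{\mu}\langle\vv{TxUy},\vv{yUy}\rangle\leq \frac1{\lambda}\langle\vv{TxUy},\vv{xTx}\rangle.$$
Since $Tp=p$, the pair $\vv{xTx}$ collapses to $\vv{pp}$, and $\langle\vv{pUp},\vv{pp}\rangle=0$ (immediate from the definition of $\langle\cdot,\cdot\rangle$, or from property (iv) with $u=v=w=p$). Thus the right-hand side is $0$, while by property (i) the left-hand side equals $\frac1{\mu}\langle\vv{pUp},\vv{pUp}\rangle=\frac1{\mu}d^2(p,Up)$. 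As $\mu>0$, this forces $d(p,Up)=0$, i.e. $Up=p$, which is the desired inclusion.

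I do not anticipate a real obstacle: the whole argument rests on two one-line observations from the axioms (i)--(iv) of the quasi-linearization function, namely that the mutual $(P_2)$ relation is symmetric (so that proving one inclusion is enough) and that $\langle\cdot,\cdot\rangle$ annihilates a degenerate pair-argument. Everything else is a direct substitution.
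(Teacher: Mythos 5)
Your argument is correct; the paper gives no proof of this proposition, deferring entirely to \cite[Corollary 3.11]{LeuNicSip18}, and what you write is exactly the substitution $x=y=p$ one expects there, with the symmetry check and the vanishing of $\langle\vv{pUp},\vv{pp}\rangle$ both verified correctly from the axioms. As a minor simplification, the symmetry detour is dispensable: for $q\in\Fix(U)$ the same substitution $x=y=q$ in the original inequality makes the left-hand side vanish and turns the right-hand side into $-\frac{1}{\lambda}d^2(q,Tq)$ via property (iii), which yields the reverse inclusion $\Fix(U)\subseteq\Fix(T)$ directly.
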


\begin{proposition}\label{t-eq-u}
Let $T$ and $U$ be self-mappings of $X$ and $\lambda>0$ such that $T$ and $U$ are $(\lambda,\lambda)$-mutually $(P_2)$. Then $T=U$.
\end{proposition}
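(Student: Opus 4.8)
The plan is to specialize the defining inequality to the diagonal $y=x$ and then to recognize the resulting quantity as $d^2(Tx,Ux)$ by means of the algebraic identities satisfied by the quasi-linearization function.

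Concretely, since $\lambda>0$, the hypothesis that $T$ and $U$ are $(\lambda,\lambda)$-mutually $(P_2)$ is, after multiplying through by $\lambda$, equivalent to the statement that for all $x,y\in X$,
$$\langle\vv{TxUy},\vv{yUy}\rangle\le\langle\vv{TxUy},\vv{xTx}\rangle.$$
First I would fix $x\in X$ and instantiate this with $y:=x$, which gives
$$\langle\vv{TxUx},\vv{xUx}\rangle-\langle\vv{TxUx},\vv{xTx}\rangle\le 0.$$
Now, the additivity property (iv) of $\langle\cdot,\cdot\rangle$, rearranged into the form $\langle\vv{ab},\vv{vw}\rangle=\langle\vv{ab},\vv{uw}\rangle-\langle\vv{ab},\vv{uv}\rangle$ and applied with $u=x$, $v=Tx$, $w=Ux$ and $\vv{ab}=\vv{TxUx}$, identifies the left-hand side of this inequality with $\langle\vv{TxUx},\vv{TxUx}\rangle$, which by property (i) equals $d^2(Tx,Ux)$. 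Hence $d^2(Tx,Ux)\le 0$, so $Tx=Ux$; as $x\in X$ was arbitrary, $T=U$.

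I do not anticipate any genuine obstacle here: once one spots that the right specialization is $y=x$, the rest is a one-line consequence of the defining properties (i) and (iv) of the quasi-linearization function. If one prefers to avoid $\langle\cdot,\cdot\rangle$ altogether, the same cancellation can be performed directly in the distance form of property $(P_2)$ — writing out $d^2(Tx,Ux)$, $d^2(x,Ux)$, $d^2(x,Tx)$ and $d^2(Ux,Ux)=0$ and cancelling the common terms — but the quasi-linearization formulation makes the collapse most transparent.
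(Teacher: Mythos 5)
Your proposal is correct and coincides with the paper's own argument: both set $y:=x$ in the $(\lambda,\lambda)$-mutual $(P_2)$ inequality and use the additivity and norm properties of the quasi-linearization function to conclude $d^2(Tx,Ux)\leq 0$. Your write-up merely makes explicit the application of property (iv) that the paper leaves implicit.
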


\begin{proof}
Let $x \in X$. Then
$$\langle\vv{TxUx},\vv{xUx}\rangle\leq \langle\vv{TxUx},\vv{xTx}\rangle,$$
so
$$d^2(Tx,Ux) = \langle\vv{TxUx},\vv{TxUx}\rangle\leq 0,$$
i.e. $Tx=Ux$.
\end{proof}

One may then similarly state the corresponding definitions for jointly $(P_2)$ families of mappings. As shown in \cite{LeuNicSip18}, all those $(P_2)$ notions generalize their firmly nonexpansive counterparts and coincide with them in the case where $X$ is a Hilbert space. The main result of that paper showed that this condition suffices for the working of the proximal point algorithm, namely that if $X$ is complete, $(T_n)_{n \in \N}$ is a family of self-mappings of $X$ with a common fixed point and $(\gamma_n)_{n\in\N} \subseteq (0,\infty)$ with $\sum_{n=0}^\infty \gamma_n^2 = \infty$, then, assuming that $(T_n)$ is jointly $(P_2)$ with respect to $(\gamma_n)$, any sequence $(x_n) \subseteq X$ such that for all $n$, $x_{n+1}=T_nx_n$, is $\Delta$-convergent (a generalization of weak convergence to arbitrary metric spaces, due to Lim \cite{Lim76}) to a common fixed point of the family.

The following result attempts to elucidate the efficacy of the joint firm nonexpansiveness condition by tying it to the well-known resolvent identity.

\begin{theorem}\label{th-res-id}
Let $(T_\gamma)_{\gamma>0}$ be a family of self-mappings of $X$. Then the following are equivalent:
\begin{enumerate}[(a)]
\item
\begin{enumerate}[(i)]
\item For all $\gamma > 0$, $T_\gamma$ is nonexpansive.
\item For all $\gamma > 0$, $t \in [0,1]$ and $x \in X$,
$$T_{(1-t)\gamma}((1-t)x+tT_\gamma x) = T_\gamma x.$$
\end{enumerate}
\item $(T_\gamma)_{\gamma>0}$ is jointly firmly nonexpansive.
\end{enumerate}
\end{theorem}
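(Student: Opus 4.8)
The plan is to prove the two implications separately, translating everything into the language of the quasi-linearization function $\langle\cdot,\cdot\rangle$ so that the CAT(0) Cauchy--Schwarz inequality \eqref{CauchySchwartz} and the identities (i)--(iv) of Berg--Nikolaev do the bookkeeping. A useful preliminary observation is that the defining inequality for $(\lambda,\mu)$-mutual firm nonexpansiveness need only be checked for the \emph{extremal admissible pair} $(\alpha,\beta)$: if $\lambda\le\mu$ one may take $\alpha=0$ and $\beta=1-\lambda/\mu$, since then $(1-\alpha)\lambda=\lambda=(1-\beta)\mu$; every other admissible pair is obtained by sliding both interpolation parameters, and — at least heuristically — the worst case for the right-hand side is when one of the two points is left untouched. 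I would first record this reduction (it should follow from the definition of firm nonexpansiveness applied to $T_\mu$ alone, interpolating between the pair $((1-\beta)y+\beta T_\mu y,\, y)$), so that in both directions I only have to deal with the normalized form
$$ d(T_\lambda x,\, T_\mu y) \le d\bigl(x,\,(1-t)y+tT_\mu y\bigr), \qquad t = 1-\tfrac{\lambda}{\mu},\ \lambda\le\mu. $$

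\emph{(a) $\Rightarrow$ (b).} Fix $\lambda\le\mu$, set $t=1-\lambda/\mu$, and let $x,y\in X$. Write $z := (1-t)y+tT_\mu y$. By hypothesis (a)(ii) applied with $\gamma=\mu$, we have $T_\lambda z = T_{(1-t)\mu} z = T_\mu y$. Hence $d(T_\lambda x, T_\mu y) = d(T_\lambda x, T_\lambda z)$, and now hypothesis (a)(i) — nonexpansiveness of $T_\lambda$ — gives $d(T_\lambda x, T_\lambda z)\le d(x,z) = d\bigl(x,(1-t)y+tT_\mu y\bigr)$, which is exactly the normalized inequality. Undoing the reduction of the first paragraph yields that $T_\lambda$ and $T_\mu$ are $(\lambda,\mu)$-mutually firmly nonexpansive for all $\lambda,\mu$, i.e. (b). This direction is essentially immediate once the reduction is in place.

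\emph{(b) $\Rightarrow$ (a).} That each $T_\gamma$ is nonexpansive is clear: taking $\lambda=\mu=\gamma$ and $\alpha=\beta=0$ in the definition of mutual firm nonexpansiveness gives $d(T_\gamma x,T_\gamma y)\le d(x,y)$, so (a)(i) holds. The substance is (a)(ii). Fix $\gamma>0$, $t\in[0,1]$, $x\in X$; put $\lambda=(1-t)\gamma\le\gamma=\mu$ and $z:=(1-t)x+tT_\gamma x$. I want to show $T_\lambda z = T_\gamma x$. I would apply the normalized form of mutual firm nonexpansiveness of $(T_\lambda,T_\gamma)$ \emph{with the two roles of the point swapped}: once to the pair $(z, x)$ to bound $d(T_\lambda z, T_\gamma x)$ above by $d\bigl(z,(1-t)x+tT_\gamma x\bigr) = d(z,z) = 0$. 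If that swap is legitimate — i.e. if the normalized inequality holds in the form $d(T_\lambda z, T_\gamma x)\le d\bigl(z,(1-s)x+sT_\gamma x\bigr)$ with the \emph{same} $s=1-\lambda/\gamma=t$ — then $T_\lambda z = T_\gamma x$ and we are done. The reason the swap is legitimate is the same extremal-pair reduction: in $(T_\lambda,T_\gamma)$-mutual firm nonexpansiveness one is free to take $\alpha=1-\lambda/\gamma=t$ together with $\beta=0$ (then $(1-\alpha)\lambda$ need not match; rather one uses the \emph{other} normalization $\beta=0$, $\alpha=1-\mu/\lambda$ — but here $\mu>\lambda$ so this $\alpha\notin[0,1]$). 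This is the delicate point: when $\lambda<\gamma$ the admissible extremal pair forces us to interpolate on the $\gamma$-side, producing $d\bigl(T_\lambda z,T_\gamma x\bigr)\le d\bigl(z,(1-t)x+tT_\gamma x\bigr)$ only after also rewriting $T_\gamma x$ via the as-yet-unproven identity — a potential circularity.

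\textbf{The main obstacle} is exactly this: extracting (a)(ii) from (b) without circularity. I expect the clean route is \emph{not} to chase distances directly but to expand $d^2(T_\lambda z, T_\gamma x)$ using the quasi-linearization identities, exploiting that $z$ lies on the geodesic $[x,T_\gamma x]$ so that $\langle\vv{zx},\vv{z\,T_\gamma x}\rangle$ and similar terms collapse by property (iv) and the CAT(0) (in)equality. Concretely: from the $(\lambda,\gamma)$-mutual firm nonexpansiveness, squaring and using \eqref{CauchySchwartz} should give an inequality of the shape $d^2(T_\lambda z,T_\gamma x)\le \frac{\lambda}{\gamma}\langle\cdots\rangle + \bigl(1-\tfrac{\lambda}{\gamma}\bigr)\langle\cdots\rangle$ in which, after substituting $z=(1-t)x+tT_\gamma x$ and using that the three points $x$, $z$, $T_\gamma x$ are collinear, every bracket telescopes to a nonpositive multiple of $d^2(T_\lambda z,T_\gamma x)$ — forcing it to be $0$. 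I would carry this out via the CAT(0) inequality applied at the point $T_\lambda z$ against the geodesic from $x$ to $T_\gamma x$ evaluated at parameter $t$, namely $d^2(T_\lambda z,z)\le (1-t)d^2(T_\lambda z,x)+t\,d^2(T_\lambda z,T_\gamma x)-t(1-t)d^2(x,T_\gamma x)$, and combine it with the mutual-firm-nonexpansiveness inequality in its $(P_2)$-style form; the two should pinch $d(T_\lambda z,T_\gamma x)$ to zero. Getting the coefficients to line up is the one genuine computation, and it is where I would spend the care.
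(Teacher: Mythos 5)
Your proposal contains the right raw material, but both directions, as written, have genuine gaps, and both stem from the same self-imposed detour: the ``extremal admissible pair'' reduction. For $(a)\Rightarrow(b)$, your computation only establishes the inequality for the single pair $\alpha=0$, $\beta=1-\lambda/\mu$, and the claimed reduction to that case is not proved --- indeed it is false as a general principle. In a Hilbert space the admissible pairs are parametrized by $\delta=(1-\alpha)\lambda=(1-\beta)\mu\in[0,\min(\lambda,\mu)]$, and the square of the right-hand side equals $\|Tx-Uy\|^2+2\delta\langle Tx-Uy,v\rangle+\delta^2\|v\|^2$ with $v=(x-Tx)/\lambda-(y-Uy)/\mu$; the binding case is therefore $\delta\to0^+$ (i.e.\ $\alpha,\beta\to1$), not the maximal $\delta$ corresponding to $\alpha=0$. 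So checking your normalized form alone does not give (b). The fix is immediate and is exactly the paper's proof: for an arbitrary admissible pair, apply (a)(ii) to \emph{both} points, $T_\delta((1-\alpha)x+\alpha T_\lambda x)=T_\lambda x$ and $T_\delta((1-\beta)y+\beta T_\mu y)=T_\mu y$, and then invoke nonexpansiveness of the single mapping $T_\delta$; your argument is the special case $\alpha=0$ of this and generalizes with no extra effort.

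For $(b)\Rightarrow(a)(ii)$, the circularity you worry about is not there, and the quasi-linearization/CAT(0) computation you sketch as a fallback (and do not carry out) is unnecessary; you have simply mis-assigned the interpolation parameters. With $z=(1-t)x+tT_\gamma x$, apply the definition of $(\gamma,(1-t)\gamma)$-mutual firm nonexpansiveness to the pair $(x,z)$, taking $\alpha=t$ on the $\gamma$-side and $\beta=0$ on the $(1-t)\gamma$-side. This pair is admissible, since $(1-\alpha)\gamma=(1-t)\gamma=(1-\beta)(1-t)\gamma$, and it yields $d(T_\gamma x,T_{(1-t)\gamma}z)\le d((1-t)x+tT_\gamma x,z)=d(z,z)=0$, which is the identity. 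The assignment you tried and rightly rejected ($\alpha=1-\lambda/\gamma$ paired with $\beta=0$ on the opposite sides) is indeed inadmissible, but it is not the one needed: interpolation must happen on the side of the \emph{larger} parameter, which here is the $x$-side, and that is perfectly compatible with the definition. With these two corrections your argument collapses to the paper's two-line proof.
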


\begin{proof}

``$(a)\Rightarrow(b)$'' (This is essentially expressed by \cite[Proposition 3.17]{LeuNicSip18}.) Let $\lambda$, $\mu > 0$, $x$, $y\in X$ and $\alpha$, $\beta \in [0,1]$ be such that
$(1-\alpha)\lambda = (1-\beta)\mu =:\delta.$
We get that 
\begin{align*}
d(T_{\lambda}x, T_{\mu} y) &= d(T_{(1-\alpha)\lambda}((1-\alpha)x+\alpha T_{\lambda } x), T_{(1-\beta)\mu}((1-\beta)y+\beta T_{\mu} y)) \\
&= d(T_{\delta}((1-\alpha)x+\alpha T_{\lambda} x), T_{\delta}((1-\beta)y+\beta T_{\mu} y)) \\
&\leq d((1-\alpha)x+\alpha T_{\lambda} x,(1-\beta)y+\beta T_{\mu} y).
\end{align*}

``$(b)\Rightarrow(a)$''
Let $\gamma > 0$. Take $x$, $y \in X$. To show that $d(T_\gamma x, T_\gamma y) \leq d(x,y)$, simply set in the joint firm nonexpansiveness condition $\lambda:=\gamma$, $\mu:=\gamma$, $\alpha:=0$ and $\beta:=0$.

To show that the resolvent identity holds, take $t \in [0,1]$ and $x \in X$. Set $y:=(1-t)x+ t T_\gamma x$, so one has to prove that $T_{(1-t)\gamma} y = T_\gamma x$. Then, if one sets $\lambda:=\gamma$, $\mu:=(1-t)\gamma$, $\alpha:=t$ and $\beta:=0$, since then $(1-\alpha)\lambda=(1-\beta)\mu$, one gets that
$$d(T_\gamma x, T_{(1-t)\gamma} y) \leq d((1-t)x+ t T_\gamma x, y) =0,$$
so $T_{(1-t)\gamma} y = T_\gamma x$.
\end{proof}

\section{An improvement on the uniform case}\label{sec:uniform}

If $T$ is a self-mapping of $X$ and $C$ is a nonempty subset of $X$ such that $T(C) \subseteq C$, we say that $T$ is {\it uniformly firmly nonexpansive} on $C$ with modulus $\vp : (0,\infty) \to (0,\infty)$ if for all $\eps> 0$ and $x$, $y \in C$ with $d(Tx,Ty) \geq \eps$ and all $t \in [0,1]$,
$$d^2(Tx,Ty) \leq d^2((1-t)x+tTx,(1-t)y+tTy) - 2(1-t)\vp(\eps).$$
One says that $T$ is {\it uniformly $(P_2)$} on $C$ with modulus $\vp$ if for all $\eps> 0$ and $x$, $y \in C$ with $d(Tx,Ty) \geq \eps$,
$$2d^2(Tx,Ty) \leq d^2(x,Ty) + d^2(y,Tx) - d^2(x,Tx) - d^2(y,Ty) - 2\vp(\eps),$$
or, equivalently, using the quasi-linearization function,
$$\langle \vv{TxTy},\vv{yTy}\rangle \leq \langle \vv{TxTy} ,\vv{xTx}\rangle - \vp(\eps).$$
(As before, the latter notion is more general than the first, though identical in Hilbert spaces.)

The second main result of the paper \cite{LeuNicSip18} had been the extraction of a rate of convergence for the proximal point algorithm in connection with this uniformity. There, a more restrictive notion of uniform firm nonexpansiveness had been considered  (first used in \cite[Section 3.4]{BarBauMofWan16}), where the modulus $\vp$ is nondecreasing and applied directly to the quantity $d(Tx,Ty)$, as that is the kind of modulus that is commonly used in defining the corresponding uniform notions of monotone operators and convex functions. It is easy, though, to redefine those notions in terms of this more general kind of modulus and to show that their resolvents fit into the definition above. One could also adapt \cite[Lemma 4.4]{LeuNicSip18} and \cite[Theorem 5.1]{LeuNicSip18} to this setting, but we shall not attempt that, as those results will be shortly superseded by the results of this section.

Specifically, the following generalizes \cite[Lemma 4.4]{LeuNicSip18}. (The estimate obtained below had been implicit in the original proof, but its importance had not been apparent at the time.)

\begin{lemma}\label{uniq}
Let $T$ be a self-mapping of $X$ and $C$ be a nonempty subset of $X$ such that $T(C) \subseteq C$. Assume that $T$ is uniformly $(P_2)$ on $C$ with modulus $\vp$. Then for all $x \in C$ and $z \in C \cap \Fix(T)$ with $d(Tx,z) \geq \eps$,
$$ \vp(\eps) \leq \langle \vv{Txz} ,\vv{xTx}\rangle.$$
\end{lemma}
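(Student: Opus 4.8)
The plan is to apply the uniform $(P_2)$ condition with the specific choice $y := z$, exploiting the fact that $z$ is a fixed point of $T$, and then to simplify the resulting quasi-linearization inequality using the identities (i)--(iv) for $\langle\cdot,\cdot\rangle$. First, fix $x \in C$ and $z \in C \cap \Fix(T)$ with $d(Tx,z) \geq \eps$. Since $Tz = z$, we have $d(Tx,Tz) = d(Tx,z) \geq \eps$, so the hypothesis that $T$ is uniformly $(P_2)$ on $C$ with modulus $\vp$ applies to the pair $(x,z)$. Using the quasi-linearization form of the definition, this gives
$$\langle \vv{TxTz},\vv{zTz}\rangle \leq \langle \vv{TxTz} ,\vv{xTx}\rangle - \vp(\eps).$$

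Next I would substitute $Tz = z$ throughout. On the left-hand side, $\vv{zTz} = \vv{zz}$, and by the additivity property (iv) (or directly, since $\langle\vv{xy},\vv{uu}\rangle = \langle\vv{xy},\vv{uv}\rangle + \langle\vv{xy},\vv{vu}\rangle = 0$ using (iii)) we get $\langle \vv{TxTz},\vv{zTz}\rangle = \langle \vv{Txz},\vv{zz}\rangle = 0$. On the right-hand side, $\vv{TxTz} = \vv{Txz}$, so the inequality becomes
$$0 \leq \langle \vv{Txz} ,\vv{xTx}\rangle - \vp(\eps),$$
which rearranges to exactly the claimed estimate $\vp(\eps) \leq \langle \vv{Txz} ,\vv{xTx}\rangle$.

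The argument is essentially a one-step specialization, so there is no real obstacle; the only point requiring a modicum of care is the bookkeeping with the quasi-linearization function when $Tz$ is replaced by $z$ — specifically, verifying that the term $\langle \vv{TxTz},\vv{zTz}\rangle$ vanishes. This follows cleanly from properties (iii) and (iv), since a ``vector'' of the form $\vv{zz}$ contributes nothing: $\langle \vv{Txz},\vv{zz}\rangle = \langle \vv{Txz},\vv{zz}\rangle + \langle \vv{Txz},\vv{zz}\rangle$ forces it to be zero, or one can simply note $d^2(z,z) = 0$ makes both summands in the defining formula cancel. Everything else is direct substitution.
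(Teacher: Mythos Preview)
Your proof is correct and follows exactly the same approach as the paper: apply the uniform $(P_2)$ inequality with $y:=z$, use $Tz=z$, and observe that $\langle \vv{Txz},\vv{zz}\rangle=0$. The paper's version is simply terser.
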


\begin{proof}
Since $T$ is uniformly $(P_2)$ on $C$ with modulus $\vp$,
$$\langle \vv{Txz},\vv{zz}\rangle \leq \langle \vv{Txz} ,\vv{xTx}\rangle - \vp(\eps).$$
so $ \vp(\eps) \leq \langle  \vv{Txz} ,\vv{xTx} \rangle$.
\end{proof}

The following is a particular case of a recent result of Kohlenbach and Powell \cite[Lemma 3.4]{KohPowXX}, which is a quantitative version of an argument going back to e.g. \cite[Lemma 2.2]{AlbReiSho02}. We give its proof for completeness.

\begin{lemma}\label{kp}
Let $(\gamma_n)_{n \in \N}$ and $(w_n)_{n \in \N}$ be sequences of non-negative reals and $b>0$, $\theta:(0, \infty) \to \N$ and $\vp: (0, \infty) \to (0,\infty)$ be such that:
\begin{itemize}
\item for all $n \in \N$, $w_n \leq b$;
\item for all $x>0$, $\sum_{n=0}^{\theta(x)} \gamma_n \geq x$;
\item for all $\eps > 0$ and all $n \in \N$ with $\eps < w_{n+1}$, $w_{n+1} \leq w_n - \gamma_n \vp(\eps)$.
\end{itemize}
Then for all $\eps > 0$ and all $n \geq \theta\left(\frac{b+1}{\vp(\eps)}\right)+1$, $w_n \leq \eps$.
\end{lemma}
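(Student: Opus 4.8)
The plan is to combine a telescoping-sum estimate -- which locates a first index at which $w_n$ has dropped to $\eps$ or below -- with an elementary ``absorbing'' observation, namely that once $w_n\le\eps$ the sequence can never again exceed $\eps$. Fix $\eps>0$ and abbreviate $N:=\theta\!\left(\frac{b+1}{\vp(\eps)}\right)$; the goal is to show $w_n\le\eps$ for all $n\ge N+1$.

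For the absorbing step, suppose $w_n\le\eps$ but $w_{n+1}>\eps$ for some $n$. Since $\eps<w_{n+1}$, the third hypothesis applied with this very $\eps$ gives $w_{n+1}\le w_n-\gamma_n\vp(\eps)\le w_n\le\eps$, because $\gamma_n\ge 0$ and $\vp(\eps)>0$; this contradicts $w_{n+1}>\eps$. Hence $w_n\le\eps$ implies $w_{n+1}\le\eps$, and by induction $w_m\le\eps$ for every $m\ge n$ as soon as $w_n\le\eps$ holds for a single $n$.

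Next I would show that $w_{n_0}\le\eps$ for some $n_0\in\{1,\dots,N+1\}$. If not, then $w_m>\eps$ for every $m\in\{1,\dots,N+1\}$, so for each $n\in\{0,\dots,N\}$ we have $\eps<w_{n+1}$ and the third hypothesis yields $\gamma_n\vp(\eps)\le w_n-w_{n+1}$. Summing over $n=0,\dots,N$ telescopes the right-hand side, and using $w_0\le b$, $w_{N+1}\ge 0$ together with the defining property of $\theta$ (note $\sum_{n=0}^{N}\gamma_n=\sum_{n=0}^{\theta((b+1)/\vp(\eps))}\gamma_n\ge\frac{b+1}{\vp(\eps)}$), we obtain
$$b+1 \;\le\; \Big(\sum_{n=0}^{N}\gamma_n\Big)\vp(\eps) \;\le\; w_0-w_{N+1} \;\le\; b,$$
a contradiction. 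Therefore such an $n_0$ exists, and by the absorbing step $w_n\le\eps$ for all $n\ge n_0$; since $n_0\le N+1$, this holds in particular for all $n\ge N+1=\theta\!\left(\frac{b+1}{\vp(\eps)}\right)+1$, as required.

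I do not anticipate any genuine difficulty: this is a textbook ``summable decrease'' argument. The only point that needs care is the index bookkeeping together with the role of the summand $\tfrac{1}{\vp(\eps)}$ multiplying $b+1$ rather than $b$ in the argument of $\theta$ -- it is precisely this slack that turns the telescoping estimate into a strict contradiction instead of a bare equality, and it is also why the conclusion comes out clean (exactly $w_n\le\eps$, with no additive error term).
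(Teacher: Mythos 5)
Your proposal is correct and follows essentially the same route as the paper's proof: a telescoping estimate using the slack $b+1$ versus $b$ to locate an index $n_0\le\theta\left(\frac{b+1}{\vp(\eps)}\right)+1$ with $w_{n_0}\le\eps$, followed by the induction showing the bound persists. The only difference is presentational (you isolate the ``absorbing'' step first), so nothing further is needed.
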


\begin{proof}
Let $\eps> 0$. Assume first that for all $n \leq \theta\left(\frac{b+1}{\vp(\eps)}\right)$, $w_{n+1} > \eps$, so for all $n \leq \theta\left(\frac{b+1}{\vp(\eps)}\right)$, $\gamma_n \vp(\eps) \leq w_n - w_{n+1}$. We get that
$$b \geq w_0 \geq w_0 - w_{\theta\left(\frac{b+1}{\vp(\eps)}\right)+1} = \sum_{n=0}^{\theta\left(\frac{b+1}{\vp(\eps)}\right)} (w_n -w_{n+1}) \geq \vp(\eps) \sum_{n=0}^{\theta\left(\frac{b+1}{\vp(\eps)}\right)} \gamma _n \geq \vp(\eps) \cdot \frac{b+1}{\vp(\eps)} = b+1,$$
a contradiction. Thus, there is an $N \leq \theta\left(\frac{b+1}{\vp(\eps)}\right)+1$ such that $w_N \leq \eps$.

We now prove by induction that for all $n \geq N$, $w_n \leq \eps$, from which it will follow that this holds for all $n \geq \theta\left(\frac{b+1}{\vp(\eps)}\right)+1$. Assume that for such an $n$, $w_n \leq \eps$ and $w_{n+1} > \eps$. But then
$$\eps < w_{n+1} \leq w_n - \gamma_n \vp(\eps) \leq w_n \leq \eps,$$
a contradiction.
\end{proof}

The general version of the above lemma has been used in \cite{KohPowXX} to give rates of convergence for iterations associated to uniformly accretive operators in uniformly convex Banach spaces (see the end of this section), following earlier work that yielded rates of convergence for the gradient flow associated to such operators in \cite{KohKou15}, and for the asymptotic behaviour towards infinity of their resolvents and the fixed-step-size proximal point algorithm in \cite{Kou17}. The general notion underlying the strategy is that of a {\it modulus of uniqueness}, which was first introduced by Kohlenbach in the early 1990s \cite{Koh90,Koh93a,Koh93b} in the context of best approximation theory, while its significance in deriving rates of convergence for asymptotically regular iterative sequences was identified later in \cite[Section 4.1]{KohOli03}.

These ideas have indeed been previously applied to our abstract proximal point algorithm in the form of \cite[Theorem 5.1]{LeuNicSip18}, but we will now show how the use of the above lemma allows us to derive the following more powerful version, where we no longer need to assume that the sequence $(d(x_n,x_{n+1})/\gamma_n)$ is nonincreasing, and we may replace the divergence of $\sum_{n=0}^{\infty} \gamma_n^2$ with that of $\sum_{n=0}^{\infty} \gamma_n$.

\begin{theorem}
Let $(T_n)_{n \in \N}$ be a family of self-mappings of $X$ and $p \in \bigcap_{n \in \N} \Fix(T_n)$. Let $(\gamma_n)_{n \in \N} \subseteq (0,\infty)$ and $\theta :(0,\infty) \to \N$ and assume that for all $x>0$,
$$\sum_{n=0}^{\theta(x)} \gamma_n \geq x.$$
Let $b > 0$ and denote by $C$ the closed ball of center $p$ and radius $b$. Let $\vp : (0, \infty) \to (0,\infty)$ and assume that for all $n$, $T_n(C) \subseteq C$ and $T_n$ is uniformly $(P_2)$ on $C$ with modulus $\gamma_n\vp$. Let $(x_n) \subseteq C$ be such that for all $n$, $x_{n+1}=T_nx_n$.

Then for all $\eps > 0$ and all $n \geq \theta\left(\frac{(b+1)^2}{\vp(\eps)}\right)+1$, $d(x_n,p) \leq \eps$. 
\end{theorem}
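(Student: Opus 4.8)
The plan is to deduce this from Lemma~\ref{kp}, applied to the sequence of \emph{squared} distances to $p$. I would set $w_n := d^2(x_n,p)$ for all $n$; since $(x_n)\subseteq C$, one has $w_n\le b^2\le (b+1)^2-1$, so $(w_n)$ is a sequence of non-negative reals bounded by the constant $(b+1)^2-1$. (Deliberately using this wasteful bound in place of the sharp $b^2$ is exactly what will produce the constant advertised in the statement.) It then remains only to check the third hypothesis of Lemma~\ref{kp}: that there is a modulus $\psi:(0,\infty)\to(0,\infty)$ so that $w_{n+1}\le w_n-\gamma_n\psi(\delta)$ whenever $\delta<w_{n+1}$.

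To obtain this decrease, I would fix $\delta>0$ and $n$ with $\delta<w_{n+1}$. Then $d(T_nx_n,p)=d(x_{n+1},p)=\sqrt{w_{n+1}}>\sqrt{\delta}$, hence $d(T_nx_n,p)\ge\sqrt{\delta}$. As $x_n\in C$, $p\in C\cap\Fix(T_n)$, and $T_n$ is uniformly $(P_2)$ on $C$ with modulus $\gamma_n\vp$, Lemma~\ref{uniq} (applied with its ``$\eps$'' equal to $\sqrt{\delta}$) gives $\gamma_n\vp(\sqrt{\delta})\le\langle\vv{x_{n+1}p},\vv{x_nx_{n+1}}\rangle$. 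Expanding the pairing by the definition of the quasi-linearization function shows it equals $\frac12\big(w_n-d^2(x_n,x_{n+1})-w_{n+1}\big)\le\frac12(w_n-w_{n+1})$, so $w_{n+1}\le w_n-2\gamma_n\vp(\sqrt{\delta})\le w_n-\gamma_n\vp(\sqrt{\delta})$. Thus the hypotheses of Lemma~\ref{kp} hold with $\psi(\delta):=\vp(\sqrt{\delta})$.

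Lemma~\ref{kp}, with constant $(b+1)^2-1$, the given $\theta$, and modulus $\psi$, then yields $w_n\le\delta$ for all $\delta>0$ and all $n\ge\theta\left(\frac{((b+1)^2-1)+1}{\vp(\sqrt{\delta})}\right)+1=\theta\left(\frac{(b+1)^2}{\vp(\sqrt{\delta})}\right)+1$; specializing $\delta:=\eps^2$ gives $d(x_n,p)\le\eps$ for all $n\ge\theta\left(\frac{(b+1)^2}{\vp(\eps)}\right)+1$, which is the assertion. The one genuinely substantive step is the opening of the second paragraph --- reading off Fej\'er-type monotonicity of $n\mapsto d^2(x_n,p)$ along the iteration from uniform $(P_2)$-ness, via Lemma~\ref{uniq} and the quasi-linearization identity; the remaining work is the square-root bookkeeping and a direct appeal to Lemma~\ref{kp}, whose own proof already supplies the ``once $w_n\le\eps$, always $w_n\le\eps$'' induction.
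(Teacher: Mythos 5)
Your proposal is correct: every step checks out, including the expansion $\langle\vv{x_{n+1}p},\vv{x_nx_{n+1}}\rangle=\tfrac12\bigl(d^2(x_n,p)-d^2(x_n,x_{n+1})-d^2(x_{n+1},p)\bigr)$ and the bookkeeping that turns the bound $(b+1)^2-1$ on $d^2(x_n,p)$ into exactly the advertised threshold. The overall strategy is the same as the paper's --- feed the Fej\'er-type decrease obtained from Lemma~\ref{uniq} into Lemma~\ref{kp} --- but your intermediate computation differs in a way worth noting. The paper takes $w_n:=d(x_n,p)$ (unsquared), writes $d^2(x_{n+1},p)=\langle\vv{x_{n+1}p},\vv{x_np}\rangle-\langle\vv{x_{n+1}p},\vv{x_nx_{n+1}}\rangle$, applies Cauchy--Schwarz to the first bracket, and then divides by $d(x_{n+1},p)$, using $d(x_{n+1},p)\le b<b+1$ to get the decrement $\gamma_n\vp(\eps)/(b+1)$; this is where the factor $b+1$ in the rate genuinely originates. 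You instead take $w_n:=d^2(x_n,p)$ and use only the exact quasi-linearization identity, with no Cauchy--Schwarz and no division; the price of the square root is absorbed by reparametrizing the modulus as $\vp(\sqrt{\delta})$. Your route is slightly more elementary and, had you used the sharp bound $b^2$ instead of the deliberately padded $(b+1)^2-1$, would yield the threshold $\theta\bigl(\frac{b^2+1}{\vp(\eps)}\bigr)+1$, which improves on the stated $\theta\bigl(\frac{(b+1)^2}{\vp(\eps)}\bigr)+1$ (note that since $\theta$ is not assumed monotone, this improvement does not formally imply the stated bound, so your padding is in fact the correct move for reproducing the theorem as written).
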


\begin{proof}
We seek to apply Lemma~\ref{kp} with the sequence $(d(x_n,p))$ playing the role of $(w_n)$ and with $\vp/(b+1)$ playing the role of $\vp$. We only need to show that for all $\eps>0$ and $n \in \N$ with $d(x_{n+1},p) > \eps$, we have that
$$d(x_{n+1},p) \leq d(x_n,p) - \frac{\gamma_n \vp(\eps)}{b+1}.$$
Let, then, $\eps>0$ and $n \in \N$ be such that $d(x_{n+1},p) > \eps$. Since $T_n$ is uniformly $(P_2)$ on $C$ with modulus $\gamma_n\vp$ and $x_{n+1}=T_nx_n$, we get by Lemma~\ref{uniq} that
$$\gamma_n\vp(\eps) \leq \langle \vv{x_{n+1}p},\vv{x_nx_{n+1}}  \rangle,$$
and thus we have that
$$d^2(x_{n+1},p) = \langle \vv{x_{n+1}p},\vv{x_{n+1}p} \rangle = \langle \vv{x_{n+1}p},\vv{x_np} \rangle - \langle \vv{x_{n+1}p},\vv{x_nx_{n+1}}\rangle \leq d(x_{n+1},p)d(x_n,p) - \gamma_n\vp(\eps).$$
Since $d(x_{n+1},p) > \eps > 0$, we can divide by it, so
$$d(x_{n+1},p) \leq d(x_n,p) - \frac{\gamma_n\vp(\eps)}{d(x_{n+1},p)} \leq d(x_n,p) - \frac{\gamma_n\vp(\eps)}{b+1},$$
which was what we needed to show.
\end{proof}

We see that the same phenomenon that occurred in \cite[Theorem 5.1]{LeuNicSip18} also occurs here (and already did in \cite[Lemma 3.4]{KohPowXX}) -- namely, the resulting rate of convergence does not depend in any way on the sequence of step-sizes $(\gamma_n)$, even though they are not said to belong to a compact interval in which case the fact would be explainable by general logical metatheorems.

Let us briefly remark a bit about the context in which the quantitative lemma was first introduced. One of its applications, \cite[Theorem 4.2]{KohPowXX}, deals with sequences $(x_n)$ inside a Banach space $X$ that are associated to a uniformly accretive operator $A$ and to a divergent sequence $(\alpha_n)$ of nonnegative reals in the sense that for each $n$ there is an $u \in Ax_{n+1}$ such that
$$x_{n+1}=x_n-\alpha_nu.$$
In the case where all resolvents of the operator exist (e.g. when $X$ is uniformly convex), it is immediate that the above relation simply means that for each $n$, $x_{n+1}=J_{\alpha_nA}x_n$, i.e. $(x_n)$ is a sequence generated by the proximal point algorithm with inputs $A$ and $(\alpha_n)$.

\section{The asymptotic behaviour at infinity}\label{sec:as-beh}

We now deal with what is classically denoted by {\it resolvent convergence}, i.e. the behaviour of resolvent mappings as their order tends to infinity. In the case of Hilbert spaces \cite[Lemma 1]{Bru74}, the essential idea of the proof goes back to Minty \cite{Min63} and was later popularized by Halpern \cite{Hal67}. A logical analysis of the latter result yielding a rate of metastability was first undertaken by Kohlenbach \cite[Section 4]{Koh11}. (For the extension due to Reich \cite{Rei80} to more general Banach spaces such as $L^p$ spaces, where the proof and its corresponding analysis require vastly different techniques, a rate of metastability was recently obtained by Kohlenbach and the author in \cite{KohSipXX}.)

\begin{lemma}\label{halp}
Let $T$ and $U$ be self-mappings of $X$ and $\lambda$, $\mu>0$ with $\lambda \leq \mu$. Assume that $T$ and $U$ are $(\lambda,\mu)$-mutually $(P_2)$. Let $x \in X$. Then
$$d^2(x,Ux) \geq d^2(x,Tx) + d^2(Tx,Ux).$$
In particular, $d(x,Ux) \geq d(x,Tx)$.
\end{lemma}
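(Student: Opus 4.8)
The plan is to unwind the $(\lambda,\mu)$-mutually $(P_2)$ hypothesis with the cleverest choice of the free point $y$, namely $y := x$ itself, and to exploit the asymmetry $\lambda \leq \mu$ by comparing the two coefficients $1/\lambda$ and $1/\mu$. Recall that $(\lambda,\mu)$-mutually $(P_2)$ means that for all $x$, $y \in X$,
$$\frac1{\mu}\langle\vv{TxUy},\vv{yUy}\rangle\leq \frac1{\lambda}\langle\vv{TxUy},\vv{xTx}\rangle.$$
Setting $y := x$ gives
$$\frac1{\mu}\langle\vv{TxUx},\vv{xUx}\rangle\leq \frac1{\lambda}\langle\vv{TxUx},\vv{xTx}\rangle.$$
The first step is to rewrite both inner products so that $d^2(x,Ux)$, $d^2(x,Tx)$ and $d^2(Tx,Ux)$ appear explicitly; using the quasi-linearization identities (iii) and (iv), write $\vv{xUx} = \vv{xTx} + \vv{TxUx}$, so $\langle\vv{TxUx},\vv{xUx}\rangle = \langle\vv{TxUx},\vv{xTx}\rangle + d^2(Tx,Ux)$.

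The second step is to substitute this back in. Abbreviating $A := \langle\vv{TxUx},\vv{xTx}\rangle$ and $D := d^2(Tx,Ux) \geq 0$, the inequality becomes $\frac1\mu(A + D) \leq \frac1\lambda A$, i.e. $\frac{D}{\mu} \leq \left(\frac1\lambda - \frac1\mu\right)A$. Since $\lambda \leq \mu$ we have $\frac1\lambda - \frac1\mu \geq 0$, and the left-hand side is nonnegative, so in particular $A \geq 0$ (this also handles the degenerate case $\lambda = \mu$, where $D \leq 0$ forces $Tx = Ux$, consistent with Proposition~\ref{t-eq-u}). Now the third step is to expand $A$ once more: $A = \langle\vv{TxUx},\vv{xTx}\rangle = \langle\vv{xTx},\vv{xTx}\rangle + \langle\vv{TxUx} - \vv{xTx}, \vv{xTx}\rangle$ — better, use $\vv{xUx} = \vv{xTx} + \vv{TxUx}$ again in the form $\langle\vv{xUx},\vv{xTx}\rangle = d^2(x,Tx) + A$, together with the Cauchy–Schwarz inequality \eqref{CauchySchwartz}: $\langle\vv{xUx},\vv{xTx}\rangle \leq d(x,Ux)\,d(x,Tx)$. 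Hence $d^2(x,Tx) + A \leq d(x,Ux)\,d(x,Tx)$.

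Finally, combine: from $\frac{D}{\mu} \leq \left(\frac1\lambda - \frac1\mu\right)A$ one wants to reach $d^2(x,Ux) \geq d^2(x,Tx) + D$. I expect the cleanest route is actually to avoid Cauchy–Schwarz on $A$ and instead apply $(P_2)$ with a \emph{shifted} point or to use the expansion $d^2(x,Ux) = \langle\vv{xUx},\vv{xTx}\rangle + \langle\vv{xUx},\vv{TxUx}\rangle$, where the first term is $d^2(x,Tx) + A$ and the second is $\langle\vv{xTx},\vv{TxUx}\rangle + D = A + D$; so $d^2(x,Ux) = d^2(x,Tx) + 2A + D$. Then the desired inequality $d^2(x,Ux) \geq d^2(x,Tx) + D$ is equivalent to $2A \geq 0$, i.e. to $A \geq 0$, which we established in the second step from $\lambda \leq \mu$. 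The last line $d(x,Ux) \geq d(x,Tx)$ is then immediate since $D \geq 0$. The main obstacle — really the only point requiring care — is the bookkeeping of the quasi-linearization identities to get the clean decomposition $d^2(x,Ux) = d^2(x,Tx) + 2A + D$ with the correct signs; once that is in hand, the hypothesis $\lambda \leq \mu$ enters exactly once, through $A \geq 0$, and the rest is automatic.
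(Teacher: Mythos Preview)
Your proposal is correct and follows essentially the same route as the paper: set $y:=x$ in the mutual $(P_2)$ inequality, use the decomposition $\vv{xUx}=\vv{xTx}+\vv{TxUx}$ to deduce that $A:=\langle\vv{TxUx},\vv{xTx}\rangle\geq 0$ from $\lambda\leq\mu$, and then conclude via the identity $d^2(x,Ux)=d^2(x,Tx)+2A+d^2(Tx,Ux)$. The paper separates the case $\lambda=\mu$ by invoking Proposition~\ref{t-eq-u}, whereas you obtain $Tx=Ux$ directly from $D/\mu\leq 0$; and your Cauchy--Schwarz detour in the ``third step'' is unnecessary (as you yourself note), but neither point affects the argument.
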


\begin{proof}
If $\lambda=\mu$, then, by Proposition~\ref{t-eq-u}, $Tx=Ux$ and the conclusion immediately follows. Assume, then, that $\lambda < \mu$. Since $T$ and $U$ are $(\lambda,\mu)$-mutually $(P_2)$, we have that
$$\frac1{\mu}\langle\vv{TxUx},\vv{xUx}\rangle\leq \frac1{\lambda}\langle\vv{TxUx},\vv{xTx}\rangle,$$
so
$$\frac1{\mu}\langle\vv{TxUx},\vv{xTx}\rangle + \frac1{\mu}\langle\vv{TxUx},\vv{TxUx}\rangle\leq \frac1{\lambda}\langle\vv{TxUx},\vv{xTx}\rangle,$$
from which we get that
$$0 \leq \frac1{\mu} d^2(Tx,Ux) \leq \left( \frac1\lambda - \frac1\mu \right) \langle\vv{TxUx},\vv{xTx}\rangle.$$
Since $\lambda < \mu$, $\frac1\lambda - \frac1\mu >0$, so $\langle\vv{xTx},\vv{TxUx}\rangle \geq 0$.
Thus,
$$d^2(x,Ux) = \langle\vv{xUx},\vv{xUx}\rangle = d^2(x,Tx) + d^2(Tx,Ux) + 2\langle\vv{xTx},\vv{TxUx}\rangle \geq d^2(x,Tx) + d^2(Tx,Ux),$$
and we are done.
\end{proof}

\begin{theorem}\label{t1}
Let $(T_n)_{n \in \N}$ be a family of self-mappings of $X$ and $(\gamma_n)_{n\in\N} \subseteq (0,\infty)$ a nondecreasing sequence and assume that $(T_n)$ is jointly $(P_2)$ with respect to $(\gamma_n)$. Put $F:=\bigcap_{n \in \N} \Fix(T_n)$. Let $x\in X$ and $b>0$ and assume that for all $n$, $d(x,T_nx) \leq b$. Then:
\begin{enumerate}[(a)]
\item for all $\eps > 0$ and all $g: \N \to \N$ there is an $N \leq \wt{g}^{\left(\left\lceil\frac {b^2}{\eps^2}\right\rceil \right)}(0)$ such that for all $i$, $j \in [N,N+g(N)]$, $d(T_ix,T_jx)\leq\eps$;
\item if in addition $X$ is complete and $\lim_{n \to \infty} \gamma_n = \infty$, then $F \neq \emptyset$ and $(T_nx)$ converges to the unique point in $F$ which is closest to $x$.
\end{enumerate}
\end{theorem}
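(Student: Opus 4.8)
The plan is to reduce both parts to the behaviour of the single scalar sequence $a_n:=d^2(x,T_nx)$. Applying Lemma~\ref{halp} with $T:=T_i$ and $U:=T_j$, for $i\le j$ (so that $\gamma_i\le\gamma_j$, since $(\gamma_n)$ is nondecreasing), at the point $x$ shows that $a_i\le a_j$ and that $d^2(T_ix,T_jx)\le a_j-a_i$; thus $(a_n)$ is nondecreasing and $d^2(T_ix,T_jx)\le|a_i-a_j|$ for all $i$, $j$. Since $0\le a_n\le b^2$ for every $n$ by hypothesis, part~(a) is now immediate from Proposition~\ref{qmcp}: given $\eps>0$ and $g:\N\to\N$, apply that proposition with $b^2$ in place of $b$ and $\eps^2$ in place of $\eps$ to obtain some $N\le\wt{g}^{\left(\left\lceil b^2/\eps^2\right\rceil\right)}(0)$ with $|a_i-a_j|\le\eps^2$ for all $i$, $j\in[N,N+g(N)]$; for such $i$, $j$ the gap estimate gives $d^2(T_ix,T_jx)\le\eps^2$, i.e.\ $d(T_ix,T_jx)\le\eps$.

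For part~(b), I would first note that, $(a_n)$ being nondecreasing and bounded above, it converges in $\R$; hence, by the gap estimate, $(T_nx)$ is a Cauchy sequence, and by completeness of $X$ it converges to some $z\in X$. The ``easy'' half of what remains is to see that $z$ is automatically the desired point \emph{provided} $z\in F$. Indeed, evaluating the $(\gamma_n,\gamma_n)$-mutual $(P_2)$ condition of $T_n$ with itself at the pair $(x,p)$, where $p\in F\subseteq\Fix(T_n)$, and simplifying using $T_np=p$, yields the familiar inequality $d^2(x,T_nx)+d^2(T_nx,p)\le d^2(x,p)$; letting $n\to\infty$ gives $d^2(x,z)+d^2(z,p)\le d^2(x,p)$ for every $p\in F$. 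If $z\in F$, this single inequality shows at once that $z$ minimises $d(x,\cdot)$ over $F$ and that it is the only minimiser there (the equality $d(x,p)=d(x,z)$ forces $d(z,p)=0$, i.e.\ $p=z$), so that no appeal to closedness or convexity of $F$, nor to the general theory of metric projections, is needed.

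It thus remains to prove $z\in F$, i.e.\ $T_mz=z$ for each fixed $m$; this is the step in which $\lim_n\gamma_n=\infty$ is used. I would invoke the $(\gamma_m,\gamma_n)$-mutual $(P_2)$ condition of the pair $(T_m,T_n)$ with $T_m$ evaluated at $z$ and $T_n$ evaluated at $x$, that is,
$$\frac1{\gamma_n}\langle\vv{T_mz\,T_nx},\vv{x\,T_nx}\rangle\le\frac1{\gamma_m}\langle\vv{T_mz\,T_nx},\vv{z\,T_mz}\rangle,$$
and let $n\to\infty$. On the left-hand side, the Cauchy--Schwarz inequality~\eqref{CauchySchwartz} bounds the modulus of the bracket by $d(T_mz,T_nx)\cdot d(x,T_nx)\le b\,d(T_mz,T_nx)$, which remains bounded as $n\to\infty$ (since $T_nx\to z$), while $1/\gamma_n\to0$; hence the left-hand side tends to $0$. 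On the right-hand side, continuity of the quasi-linearization function together with $T_nx\to z$ gives the limit $\frac1{\gamma_m}\langle\vv{T_mz\,z},\vv{z\,T_mz}\rangle=-\frac1{\gamma_m}d^2(T_mz,z)$. Passing to the limit yields $0\le-\frac1{\gamma_m}d^2(T_mz,z)$, and since $\gamma_m>0$ this forces $T_mz=z$. Hence $z\in F$, so in particular $F\ne\emptyset$, and together with the previous paragraph this completes the proof.

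I expect this last step to be the only genuine obstacle. Everything hinges on the choice of which point is inserted into which of the two mappings appearing in the joint $(P_2)$ condition --- here $z$ into $T_m$ and $x$ into $T_n$ --- arranged so that, after dividing by $\gamma_n$, the ``error'' term is annihilated by the boundedness hypothesis $d(x,T_nx)\le b$ while the ``main'' term survives. This is the abstract form of the Minty--Halpern computation underlying resolvent convergence, and the delicacy lies in the bookkeeping of signs and slots rather than in anything deeper.
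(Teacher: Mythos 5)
Your proposal is correct, and parts (a) and the uniqueness-of-the-nearest-point argument coincide with the paper's proof (monotonicity of $d^2(x,T_nx)$ via Lemma~\ref{halp}, Proposition~\ref{qmcp}, and the $(P_2)$ inequality $d^2(x,T_nx)+d^2(T_nx,p)\le d^2(x,p)$ for $p\in F$). Where you genuinely diverge is the step showing that the limit $z$ lies in $F$. The paper instantiates the $(\gamma_n,\gamma_m)$-mutual $(P_2)$ condition at the pair $(x,\,T_nx)$, obtaining the quantitative estimate $d(T_nx,T_mT_nx)\le b\gamma_m/\gamma_n$, and then lets $n\to\infty$, using (implicitly) that $T_m$ is nonexpansive --- hence continuous --- to identify $\lim_n d(T_nx,T_mT_nx)$ with $d(z,T_mz)$. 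You instead instantiate the condition at $(z,\,x)$ and pass to the limit directly: the $1/\gamma_n$ factor kills the left-hand side thanks to the two-sided Cauchy--Schwarz bound and $d(x,T_nx)\le b$, while continuity of the quasi-linearization function (a purely metric fact, as it is built from squared distances) turns the right-hand side into $-\frac{1}{\gamma_m}d^2(T_mz,z)$. Your route has the small advantage of never invoking continuity of $T_m$ (which in the paper's setting does follow from joint $(P_2)$, but requires the extra observation that each $T_m$ satisfies $(P_2)$ and is therefore nonexpansive); the paper's route has the advantage of producing the explicit rate $d(T_nx,T_mT_nx)\le b\gamma_m/\gamma_n$, which is in the quantitative spirit of the rest of the paper. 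Both computations are instances of the same Minty--Halpern mechanism, as you note, and your bookkeeping of slots and signs checks out.
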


\begin{proof}
\begin{enumerate}[(a)]
\item Let $\eps>0$ and $g:\N \to \N$. For all $n$, $p \in \N$ with $n \leq p$, $\gamma_n \leq \gamma_p$, and so, by Lemma~\ref{halp}, $d(x,T_px) \geq d(x,T_nx)$. Thus, $(d^2(x,T_nx))$ is a nondecreasing sequence in $[0,b^2]$ and by Proposition~\ref{qmcp}, there is an $N \leq \wt{g}^{\left(\left\lceil\frac {b^2}{\eps^2}\right\rceil \right)}(0)$ such that for all $i$, $j \in [N,N+g(N)]$, $|d^2(x,T_ix) - d^2(x,T_jx)|\leq\eps^2$.

Let $i$, $j \in [N,N+g(N)]$. Again, by Lemma~\ref{halp},
$$d^2(T_ix,T_jx) \leq |d^2(x,T_ix) - d^2(x,T_jx)|\leq\eps^2,$$
so $d(T_ix,T_jx)\leq\eps$.
\item We have that $(T_nx)$ is metastable, hence Cauchy. Since $X$ is complete, $(T_nx)$ is convergent. Denote its limit by $p$. We have that for all $m$, $n \in \N$,
$$\frac1{\gamma_m}\langle\vv{T_nxT_mT_nx},\vv{T_nxT_mT_nx}\rangle\leq \frac1{\gamma_n}\langle\vv{T_nxT_mT_nx},\vv{xT_nx}\rangle,$$
so for all $m$, $n \in \N$,
$$\frac1{\gamma_m} d^2(T_nx,T_mT_nx) \leq \frac1{\gamma_n} d(T_nx,T_mT_nx)d(x,T_nx).$$
For all $m$, $n \in \N$, either $d(T_nx,T_mT_nx) \neq 0$, so we can divide the above by it and get that
$$ d(T_nx,T_mT_nx) \leq \frac{\gamma_m}{\gamma_n}d(x,T_nx) \leq \frac{b\gamma_m}{\gamma_n},$$
or $d(T_nx,T_mT_nx)= 0$, so clearly then $d(T_nx,T_mT_nx)\leq \frac{b\gamma_m}{\gamma_n}$. Thus, for all $m \in \N$,
$$d(p,T_mp) = \lim_{n \to \infty} d(T_nx,T_mT_nx) \leq \lim_{n \to \infty} \frac{b\gamma_m}{\gamma_n} = 0,$$
so for all $m \in \N$, $p \in \Fix(T_m)$, i.e. $p \in F$ (so $F \neq \emptyset$).

Now let $q \in F$ with $q \neq p$. Then, for all $n \in \N$, since $T_n$ is $(P_2)$ and $q \in \Fix(T_n)$,
$$2d^2(T_nx,q) \leq d^2(x,q) + d^2(T_nx,q) - d^2(x,T_nx),$$
i.e.
$$d^2(x,q) -d^2(T_nx,q) - d^2(x,T_nx) \geq 0.$$
By passing to the limit, we get that
$$d^2(x,q) - d^2(p,q) - d^2(x,p) \geq 0,$$
i.e.
$$d^2(x,q) \geq d^2(x,p) + d^2(p,q) > d^2(x,p),$$
so $p$ is the unique point in $F$ which is closest to $x$.
\end{enumerate}
\end{proof}

\begin{theorem}\label{t2}
Assume that $X$ is complete. Let $(T_\gamma)_{\gamma>0}$ be a jointly $(P_2)$ family of self-mappings of $X$. Put $F:=\bigcap_{\gamma > 0} \Fix(T_\gamma)$. Let $x\in X$, $b>0$, and $(\lambda_n)_{n \in \N} \subseteq (0, \infty)$ with $\lim_{n \to \infty} \lambda_n = \infty$ and assume that for all $n$, $d(x,T_{\lambda_n}x) \leq b$. Then $F \neq \emptyset$ and the curve $(T_\gamma x)_{\gamma>0}$ is continuous and converges to the unique point in $F$ which is closest to $x$.
\end{theorem}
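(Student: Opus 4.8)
The plan is to deduce the result from Theorem~\ref{t1} via a subsequence argument, after first dispatching two structural points and, more delicately, the continuity of the curve. To start, note that by Proposition~\ref{same-fixed} any two members of the family, being mutually $(P_2)$, have the same fixed points, so $\Fix(T_\gamma)$ is the same set for all $\gamma>0$; in particular it equals $F$, and so does the intersection of the fixed-point sets over any subfamily. Next set $f(\gamma):=d^2(x,T_\gamma x)$: for $\lambda\le\mu$, applying Lemma~\ref{halp} to the $(\lambda,\mu)$-mutually $(P_2)$ pair $T_\lambda,T_\mu$ at the point $x$ gives $f(\mu)\ge f(\lambda)+d^2(T_\lambda x,T_\mu x)$, so $f$ is nondecreasing and $d^2(T_\lambda x,T_\mu x)\le f(\mu)-f(\lambda)$; and since $\lambda_n\to\infty$ while $d(x,T_{\lambda_n}x)\le b$, monotonicity forces $f(\gamma)\le b^2$ for every $\gamma$. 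Hence $f$ is nondecreasing and bounded, and so has one-sided limits at every point.

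For continuity, fix $\gamma_0>0$. From $d^2(T_\lambda x,T_\mu x)\le|f(\mu)-f(\lambda)|$ together with the existence of the one-sided limits of $f$ at $\gamma_0$, the net $(T_\lambda x)$ is Cauchy as $\lambda\uparrow\gamma_0$ and as $\lambda\downarrow\gamma_0$, and hence, by completeness, converges, say to $z^-$ and $z^+$ respectively. To identify $z^-$, put both inputs equal to $x$ in the $(\lambda,\gamma_0)$-mutual $(P_2)$ inequality for the pair $T_\lambda,T_{\gamma_0}$, obtaining
$$\frac1{\gamma_0}\langle\vv{T_\lambda x\,T_{\gamma_0}x},\vv{x\,T_{\gamma_0}x}\rangle\le\frac1\lambda\langle\vv{T_\lambda x\,T_{\gamma_0}x},\vv{x\,T_\lambda x}\rangle,$$
and let $\lambda\uparrow\gamma_0$; by continuity of the quasi-linearization function this yields $\langle\vv{z^-T_{\gamma_0}x},\vv{x\,T_{\gamma_0}x}\rangle\le\langle\vv{z^-T_{\gamma_0}x},\vv{x\,z^-}\rangle$, whence $d^2(z^-,T_{\gamma_0}x)\le0$ by the computation in the proof of Proposition~\ref{t-eq-u}, so that $z^-=T_{\gamma_0}x$; symmetrically, using the pair $T_{\gamma_0},T_\mu$ and letting $\mu\downarrow\gamma_0$, one gets $z^+=T_{\gamma_0}x$. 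Therefore $f$ is continuous at $\gamma_0$, and then $\gamma\mapsto T_\gamma x$ is continuous there as well, again by the bound $d^2(T_\gamma x,T_{\gamma_0}x)\le|f(\gamma)-f(\gamma_0)|$.

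For nonemptiness of $F$ and convergence, extract from $(\lambda_n)$ a strictly increasing subsequence $(\lambda_{n_k})$ with $\lambda_{n_k}\to\infty$. The subfamily $(T_{\lambda_{n_k}})_k$ is jointly $(P_2)$ with respect to the nondecreasing sequence $(\lambda_{n_k})$, satisfies $d(x,T_{\lambda_{n_k}}x)\le b$, and lives in the complete space $X$, so Theorem~\ref{t1}(b) applies and gives $\bigcap_k\Fix(T_{\lambda_{n_k}})\neq\emptyset$; since this intersection equals $F$, we conclude $F\neq\emptyset$, and, writing $p$ for the unique point of $F$ closest to $x$, that $T_{\lambda_{n_k}}x\to p$. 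If $(T_\gamma x)$ failed to converge to $p$ as $\gamma\to\infty$, there would be $\eps>0$ and $\mu_j\to\infty$ with $d(T_{\mu_j}x,p)\ge\eps$; passing to a strictly increasing subsequence and applying Theorem~\ref{t1}(b) once more (the relevant intersection of fixed-point sets again being $F$) would force $T_{\mu_{j_l}}x\to p$, a contradiction. Hence $(T_\gamma x)_{\gamma>0}$ converges to $p$.

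The main obstacle should be the continuity step, and in particular the identification of $z^\pm$ with $T_{\gamma_0}x$: in the firmly nonexpansive setting this would be immediate from the resolvent identity of Theorem~\ref{th-res-id}, but that identity is not available under the weaker joint $(P_2)$ hypothesis, so one is forced to pass to the limit inside the quasi-linearized $(P_2)$ inequality and invoke the uniqueness mechanism behind Proposition~\ref{t-eq-u}.
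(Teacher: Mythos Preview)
Your argument is correct, but the paper takes a considerably more direct route for continuity, which is worth pointing out. Instead of forming one-sided limits $z^\pm$ and then identifying them with $T_{\gamma_0}x$ by passing to the limit in the $(P_2)$ inequality, the paper extracts an \emph{explicit} modulus of continuity straight from the mutual $(P_2)$ condition: for $\Gamma>0$ and $\Gamma\le\lambda\le\mu$, starting from
\[
\frac1{\mu}\langle\vv{T_\lambda x T_\mu x},\vv{xT_\mu x}\rangle\le \frac1{\lambda}\langle\vv{T_\lambda x T_\mu x},\vv{xT_\lambda x}\rangle,
\]
one gets $\frac1\mu d^2(T_\lambda x,T_\mu x)\le\frac{\mu-\lambda}{\lambda\mu}\langle\vv{T_\lambda x T_\mu x},\vv{xT_\lambda x}\rangle\le \frac{\mu-\lambda}{\lambda\mu}\cdot 2b^2$, hence $d(T_\lambda x,T_\mu x)\le b\sqrt{2}\,\sqrt{\mu-\lambda}/\sqrt{\Gamma}$, so the curve is in fact locally $\tfrac12$-H\"older. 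This avoids your use of completeness in the continuity step and yields a quantitative statement for free; your approach, on the other hand, is a nice illustration that the uniqueness mechanism behind Proposition~\ref{t-eq-u} already forces the one-sided limits to coincide with $T_{\gamma_0}x$, so continuity holds even via this softer route.

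For the convergence part your subsequence/contradiction argument is equivalent to what the paper does: it simply observes that Theorem~\ref{t1}(b) applies to \emph{every} nondecreasing sequence $\gamma_n\to\infty$, which (together with $F=\bigcap_n\Fix(T_{\gamma_n})$) is enough to conclude that the whole curve converges to the projection of $x$ onto $F$. One small redundancy in your write-up: once you have shown $z^-=z^+=T_{\gamma_0}x$, continuity of $\gamma\mapsto T_\gamma x$ at $\gamma_0$ is immediate; the detour through continuity of $f$ and the bound $d^2(T_\gamma x,T_{\gamma_0}x)\le|f(\gamma)-f(\gamma_0)|$ is not needed.
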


\begin{proof}
For all $\gamma > 0$ there is an $n$ such that $\lambda_n \geq \gamma$, and so, by Lemma~\ref{halp}, $d(x,T_\gamma x) \leq d(x,T_{\lambda_n}x) \leq b$. In addition, by Proposition~\ref{same-fixed}, for all $(\gamma_n)_{n \in \N} \subseteq (0, \infty)$, $F=\bigcap_{n \in \N} \Fix(T_{\gamma_n})$.

To show continuity, we shall show that for any $\Gamma > 0$, the curve $(T_\gamma x)_{\gamma\geq \Gamma}$ is uniformly continuous. Let $\Gamma > 0$ and $\lambda$, $\mu \geq \Gamma$ with $\lambda \leq \mu$. Then
$$\frac1{\mu}\langle\vv{T_\lambda xT_\mu x},\vv{xT_\mu x}\rangle\leq \frac1{\lambda}\langle\vv{T_\lambda xT_\mu x},\vv{xT_\lambda x}\rangle,$$
so
$$\frac1\mu d^2(T_\lambda x,T_\mu x) \leq \frac{\mu-\lambda}{\lambda\mu} \langle\vv{T_\lambda xT_\mu x},\vv{xT_\lambda x}\rangle \leq \frac{\mu-\lambda}{\lambda\mu} \cdot d(T_\lambda x,T_\mu x)d(x,T_\lambda x) \leq \frac{\mu-\lambda}{\lambda\mu} \cdot 2b^2,$$
from which we get
$$d(T_\lambda x, T_\mu x) \leq \sqrt{\frac{\mu -\lambda}\lambda} \cdot b\sqrt{2} \leq \sqrt{\mu-\lambda}\cdot \frac{b\sqrt{2}}{\sqrt{\Gamma}},$$
which implies the sought-after uniform continuity.

By applying Theorem~\ref{t1}, we get that $F \neq \emptyset$ and that for all nondecreasing $(\gamma_n)_{n \in \N} \subseteq (0, \infty)$ such that $\lim_{n \to \infty} \gamma_n = \infty$, the sequence $(T_{\gamma_n}x)_{n \in \N}$ converges to the unique point in $F$ which is closest to $x$, so the curve $(T_\gamma x)_{\gamma>0}$ converges to that same point.
\end{proof}

We see that the above result subsumes \cite[Theorem 1]{Hal67}, \cite[Lemma 1]{Bru74}, \cite[Theorem 24.1]{GoeRei84}, \cite[Theorem 3.1.1]{Jos97} and \cite[Theorem 1.4]{BacRei14}.

\mbox{}

\mbox{}

Thus, these examples have shown, in addition to the results in \cite{LeuNicSip18}, that jointly firmly nonexpansive families of mappings unify a number of algorithms involving resolvent-type mappings, and further work in this direction may increase the area of applicability even more. A promising avenue would be to investigate how the notion may be extended to Banach spaces which are more general than Hilbert spaces, allowing the abstract treatment of results such as the ones in \cite{BruRei77, ReiSha87, ReiSab11, MarReiSab12, MarReiSab13a, MarReiSab13b, MarReiSab13c}.

\section{Acknowledgements}

This work has been supported by the German Science Foundation (DFG Project KO 1737/6-1) and by a grant of the Romanian National Authority for Scientific Research, CNCS - UEFISCDI, project number PN-III-P1-1.1-PD-2019-0396.

I would like to thank Liviu P\u aunescu for suggesting to investigate continuity in Theorem~\ref{t2}.

\end{document}